\DeclareMathAlphabet{\mathcal}{OMS}{cmsy}{m}{n} 
\newtheorem{theorem}{Theorem}
\newtheorem{proposition}{Proposition}
\newtheorem{lemma}{Lemma}
\theoremstyle{definition}
\newtheorem*{remarks}{Remarks}
\newtheorem*{remark}{Remark}
\def\liebrack  {\ensuremath{[\,\cdot\, , \cdot\,]}}
\newcommand{\vertbar}{\>|\>}
\DeclareMathOperator{\ad}{\mathsf{ad}}
\DeclareMathOperator{\dcobound}{d}
\DeclareMathOperator{\End}{End}
\DeclareMathOperator{\Z}{Z}
\begin{document}

\title{Commutative post-Lie algebra structures on Kac--Moody algebras}

\author{Dietrich Burde}
\address{Universit\"at Wien, Wien, Austria}
\email{dietrich.burde@univie.ac.at}

\author{Pasha Zusmanovich}
\address{University of Ostrava, Ostrava, Czech Republic}
\email{pasha.zusmanovich@osu.cz}

\date{First written May 10, 2018; last minor revision May 6, 2019}
\thanks{Comm. Algebra, to appear; arXiv:1805.04267}
\keywords{Commutative post-Lie algebra; loop algebra; Kac--Moody algebra}
\subjclass[2010]{17B67}

\begin{abstract}
We determine commutative post-Lie algebra structures on some 
infinite-dimensional Lie algebras. We show that all commutative post-Lie 
algebra structures on loop algebras are trivial. This extends the results for 
finite-dimensional perfect Lie algebras. Furthermore we show that all 
commutative post-Lie algebra structures on affine Kac--Moody Lie algebras are 
``almost trivial''.
\end{abstract}

\maketitle

\section{Introduction}

Recently there is a surge of interest in so-called post-Lie algebras and 
post-Lie algebra structures. One origin comes from the study of geometric 
structures on Lie groups, where post-Lie algebras arise as a common 
generalization of pre-Lie algebras \cite{HEL,KIM,SEG,BU5,BU19,BU24} and 
LR-algebras \cite{BU34, BU38}. Here pre-Lie algebras, also called left-symmetric
algebras, Vinberg algebras, or Koszul--Vinberg algebras, have been studied 
intensively before. For a survey, see \cite{BU24}. On the other hand, post-Lie 
algebras have been introduced by Vallette \cite{VAL} in $2007$ in connection 
with the homology of partition posets and the study of Koszul operads. Then they
were studied by several authors in various contexts, e.g., for algebraic operad triples \cite{LOD}, in connection with modified Yang--Baxter equations, 
Rota--Baxter operators, universal enveloping algebras, double Lie algebras, $R$-matrices, isospectral flows, Lie--Butcher series and 
many other topics \cite{BAG, ELM, GUB}.

Concerning post-Lie algebra structures on pairs of Lie algebras 
$(\mathfrak{g},\mathfrak{n})$, the existence question and the classification is
of particular interest. There have been many results obtained so far. In 
\cite{BU51} we introduced a special class of post-Lie algebra structures, namely
commutative ones. In this case, the two Lie algebras $\mathfrak{g}$ and 
$\mathfrak{n}$ coincide, and we obtain a bilinear commutative product satisfying
a certain compatibility condition with the Lie bracket, which can be considered
as a generalization of the left-symmetric identity (for a precise definition, 
see the introductory section below).

Commutative post-Lie algebra structures, CPA-structures in short, are much more
tractable, and we have obtained several existence and classification results 
\cite{BU51,BU52,BU57}. Among other things we proved in \cite{BU52} that any 
commutative post-Lie algebra structure on a finite-dimensional perfect Lie 
algebra over field of characteristic zero is trivial. Moreover we classified 
CPA-structures on certain classes of nilpotent Lie algebras. It is natural to 
study CPA-structures also for infinite-dimensional Lie algebras. In \cite{TAN} 
and \cite{TAY} this has been done already for the two-sided 
infinite-dimensional Witt algebra and some of its generalizations.

We want to continue these investigations in this paper. We will prove that 
CPA-structures on loop algebras are trivial, and ``almost trivial'' on 
Kac--Moody algebras; see Theorem \ref{th-2} and Theorem \ref{th-22} for exact 
formulations.

\section{Definitions, notations and conventions}

Let $A$ be a nonassociative algebra over a field $K$ in the sense of Schafer 
\cite{SCH}, with $K$-bilinear product $A\times A\rightarrow  A$, 
$(a,b)\mapsto ab$. We will assume that $K$ is an arbitrary field of 
characteristic different from $2$, if not said otherwise. Consider bilinear maps
$\varphi: A \times A \to A$ such that for any $a\in A$, the linear map 
$\varphi(a,\cdot): A \to A$ is a derivation of $A$. In other words, 
\begin{equation*}
\varphi(a,bc) = \varphi(a,b)c + b\varphi(a,c)
\end{equation*}
for any $a,b,c \in A$. The set of such bilinear maps forms a vector space which
will be denoted by $\mathcal D(A)$, and the subspace of such \emph{symmetric}
maps will be denoted by $\mathcal D_{comm}(A)$.

Recall that a \emph{commutative post-Lie algebra structure} (or, 
\emph{CPA-structure}) on a Lie algebra $L$ is a new binary multiplication 
$\cdot$ on $L$ which lies in $\mathcal D_{comm}(L)$, i.e., satisfying
$$
x \cdot y = y \cdot x
$$
and
$$
x \cdot [y,z] = [x \cdot y,z] - [x \cdot z,y]
$$
and which, additionally, satisfies the condition
$$
[x,y] \cdot z = x \cdot (y \cdot z) - y \cdot (x \cdot z)
$$
for any $x,y,z \in L$. When evaluating CPA-structures on various classes of Lie
algebras, it will be convenient to write this new multiplication as a bilinear
map $\varphi: L \times L \to L$, i.e., the previous three conditions are written
in the form
\begin{gather}
\varphi(x,y) = \varphi(y,x)  \label{eq-comm1}  \\
\varphi(x,[y,z]) = [\varphi(x,y),z] - [\varphi(x,z),y]  \label{eq-1} \\
\varphi([x,y],z) = \varphi(x,\varphi(y,z)) - \varphi(y,\varphi(x,z)) \label{eq-nonlin}
\end{gather}
for any $x,y,z \in L$.

For a Lie algebra $L$, $\Z(L)$ denotes the center of $L$. If $\Z(L) = 0$, then 
$L$ is called \emph{centerless}. If $[L,L] = L$, then $L$ is called 
\emph{perfect}.

All unadorned tensor products are over the base field $K$. The symbol $\oplus$ 
denotes the direct sum in the category of vector spaces.

\section{Twisted loop algebras}\label{sec-loop}

Given a Lie algebra $L$ and a commutative associative algebra $A$, the 
\emph{current Lie algebra} $L \otimes A$ carries a multiplication uniquely
defined by the formula
$$
[x \otimes a, y \otimes b] = [x,y] \otimes ab ,
$$
where $x,y \in L$, and $a,b \in A$. In the particular case $A = K[t,t^{-1}]$,
the Laurent polynomial algebra, we speak of the \emph{(untwisted) loop Lie 
algebra associated with $L$}.

Now let 
\begin{equation}\label{eq-grr}
L = \bigoplus_{\overline i \in \mathbb Z / n\mathbb Z} L_{\overline i}
\end{equation}
be a $\mathbb Z / n\mathbb Z$-graded Lie algebra, and consider the Lie algebra
\begin{equation}\label{eq-gr}
\widehat L = \bigoplus_{i \in \mathbb Z} (L_{i(mod\, n)} \otimes t^i) .
\end{equation}
This subalgebra of the loop Lie algebra $L \otimes K[t,t^{-1}]$ will be called a
\emph{twisted loop Lie algebra associated to the graded Lie algebra $L$}. The
untwisted case is formally included in the twisted one, when $n=1$ and the 
grading (\ref{eq-grr}) consists of the single zero component.

The direct sum (\ref{eq-gr}) is a $\mathbb Z$-grading, what will be crucial in 
what follows. More generally, let $L = \bigoplus_{g \in G} L_g$ be a Lie algebra
graded by an abelian group $G$. Then both vector spaces $\mathcal D(L)$ and 
$\mathcal D_{comm}(L)$ inherit a $G$-grading from $L$. Indeed, let us say that a
bilinear map $\varphi: L \times L \to L$ \emph{has degree $g \in G$}, or, 
symbolically, $\deg \varphi = g$, if $\varphi(L_h,L_f) \subseteq L_{h+f+g}$ for
any $h,f \in G$. Then:

\begin{proposition}\label{prop-33}
For an arbitrary $G$-graded Lie algebra $L$, we have
\begin{align*}
\mathcal D(L) & = 
\bigoplus_{g \in G} \{\varphi \in \mathcal D(L) \vertbar \deg \varphi = g\} , 
\\
\mathcal D_{comm}(L) & = 
\bigoplus_{g \in G} 
\{\varphi \in \mathcal D_{comm}(L) \vertbar \deg \varphi = g\} .
\end{align*}
\end{proposition}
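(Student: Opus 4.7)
The plan is the standard one for showing that a natural grading on a vector space descends to a space of bilinear maps on it. Given $\varphi \in \mathcal D(L)$ and $g \in G$, I would define the degree-$g$ component $\varphi_g$ via projection: writing $\pi_k \colon L \to L_k$ for the projection onto the $k$-th graded piece, set $\varphi_g(x,y) := \pi_{h+f+g}(\varphi(x,y))$ for homogeneous $x \in L_h$, $y \in L_f$, and extend $\varphi_g$ to $L \times L$ by bilinearity. By construction $\varphi_g(L_h,L_f) \subseteq L_{h+f+g}$, so $\deg \varphi_g = g$; and because $\varphi(x,y)$ has only finitely many nonzero homogeneous components for each pair $(x,y)$, one has $\varphi = \sum_{g} \varphi_g$ as a pointwise-finite sum, placing $\varphi$ in the formal sum $\bigoplus_{g \in G} \{\varphi \in \mathcal D(L) \vertbar \deg \varphi = g\}$.

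The key step is then to verify that each $\varphi_g$ lies in $\mathcal D(L)$, i.e., that the projection preserves the derivation identity~(\ref{eq-1}). The point is that the Lie bracket is homogeneous of degree $0$ with respect to the grading, so projection commutes with bracketing against a fixed homogeneous element: for $a \in L_p$, $b \in L_h$, $c \in L_f$ one has
$\pi_{p+h+f+g}[\varphi(a,b),c] = [\pi_{p+h+g}\varphi(a,b),c] = [\varphi_g(a,b),c]$,
and analogously for the second term on the right of~(\ref{eq-1}), while the left side satisfies $\pi_{p+h+f+g}\varphi(a,[b,c]) = \varphi_g(a,[b,c])$ since $[b,c] \in L_{h+f}$. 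Applying $\pi_{p+h+f+g}$ to identity~(\ref{eq-1}) for $\varphi$ therefore yields the same identity for $\varphi_g$ on homogeneous triples, hence on all of $L$ by multilinearity. For the commutative case, projecting $\varphi(x,y) = \varphi(y,x)$ on a homogeneous pair $(x,y) \in L_h \times L_f$ to $L_{h+f+g}$ immediately gives $\varphi_g(x,y) = \varphi_g(y,x)$, so~(\ref{eq-comm1}) descends to each component.

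Directness of the sum is routine: if $\sum_{g} \varphi_g = 0$ pointwise with $\deg \varphi_g = g$, then evaluating at a homogeneous pair $(x,y) \in L_h \times L_f$ yields $\sum_g \varphi_g(x,y) = 0$ with $\varphi_g(x,y) \in L_{h+f+g}$, and directness of the grading $L = \bigoplus_k L_k$ forces each $\varphi_g(x,y) = 0$, whence $\varphi_g = 0$ for every $g$. The only potential obstacle is bookkeeping, namely tracking that the Lie bracket has degree zero so that projection commutes with it on the right-hand side of~(\ref{eq-1}); once this observation is in place, the entire proposition reduces to verifying the defining identities componentwise in the grading.
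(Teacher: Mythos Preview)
Your proposal is correct and is precisely the standard argument the paper has in mind; the paper does not spell out a proof at all, merely stating that it ``follows almost verbatim the proof of a similar statement for derivations'' and citing Farnsteiner. Your projection-and-componentwise-verification argument is that standard proof.
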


The proof is standard, and follows almost verbatim the proof of a similar
statement for derivations. See, for example, Proposition 1.1 in \cite{FAR}.

In the sequel, it will be convenient to make use of the following auxiliary 
definition. Let us say that the Lie algebra $L$ \emph{satisfies the 
condition (*)} if any $\varphi \in \mathcal D_{comm}(L)$ such that 
$\varphi(x,\varphi(y,z)) = \varphi(y,\varphi(x,z))$ for any $x,y,z \in L$, 
vanishes.

\begin{proposition}\label{prop-p}
Let $L$ be a $\mathbb Z / n\mathbb Z$-graded Lie algebra satisfying the 
condition (*), and $\widehat L$ the twisted loop algebra associated to $L$. Then
there is a bijection between the sets of CPA-structures on $\widehat L$ and 
CPA-structures on $L$. Namely, any CPA-structure on $\widehat L$ is of the form 
\begin{equation}\label{eq-ij}
(x \otimes t^i, y \otimes t^j) \mapsto \varphi(x,y) \otimes t^{i+j} ,
\end{equation}
for any $x \in L_{i(mod\, n)}$, $y \in L_{j(mod\, n)}$ and $i,j \in \mathbb Z$,
where $\varphi$ is a CPA-structure on $L$.
\end{proposition}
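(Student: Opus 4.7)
My plan is to exploit the $\mathbb{Z}$-grading of $\widehat L$ from (\ref{eq-gr}) to decompose any CPA-structure $\varphi$ on $\widehat L$ into $\mathbb{Z}$-homogeneous pieces, kill all nonzero-degree pieces using condition (C), and then identify the surviving degree-zero piece with a CPA-structure on $L$ via (\ref{eq-ij}). The converse direction, that any CPA-structure on $L$ yields one on $\widehat L$ via (\ref{eq-ij}), is a routine direct verification of the three axioms on homogeneous tensors.

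By Proposition \ref{prop-33} applied with $G = \mathbb{Z}$, one writes $\varphi = \sum_{k\in\mathbb{Z}}\varphi_k$, where $\varphi_k$ sends $L_{\bar i}\otimes t^i \times L_{\bar j}\otimes t^j$ into $L_{\overline{i+j+k}}\otimes t^{i+j+k}$. Each $\varphi_k$ inherits commutativity (\ref{eq-comm1}) and the derivation condition (\ref{eq-1}), since both are linear and grading-respecting. Expanding the nonlinear condition (\ref{eq-nonlin}) and collecting terms of $\mathbb{Z}$-degree $m$ yields, for each $m$,
\[
\varphi_m([x,y],z) = \sum_{i+j=m}\bigl(\varphi_i(x,\varphi_j(y,z)) - \varphi_i(y,\varphi_j(x,z))\bigr).
\]

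The core of the argument is a rigidity step: each $\varphi_k$ should be uniquely determined by a bilinear map $\widetilde\varphi_k \in \mathcal{D}_{comm}(L)$ of $\mathbb{Z}/n\mathbb{Z}$-degree $\bar k$, via $\varphi_k(x\otimes t^i,y\otimes t^j) = \widetilde\varphi_k(x,y)\otimes t^{i+j+k}$. The point is that the right-hand side is independent of the choice of $\mathbb{Z}$-lifts $i\in\bar i$, $j\in\bar j$; this I would prove using commutativity and the derivation condition for $\varphi_k$ to show that shifting $i\mapsto i+n$ changes nothing. Under this encoding, the commuting-composition identity for $\varphi_k$ on $\widehat L$ translates into the same identity for $\widetilde\varphi_k$ on $L$. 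This is the main obstacle: extracting an honest bilinear map on $L$ from a $\mathbb{Z}$-homogeneous map on $\widehat L$ requires a careful propagation of the derivation condition across different $t$-powers, exploiting (\ref{eq-1}) with varying decompositions of the second argument.

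Granting the rigidity step, to kill $\varphi_k$ for $k\ne 0$ one takes the maximal $N$ with $\varphi_N\ne 0$ and supposes $N>0$. The $m = 2N$ instance of the displayed expansion collapses, by maximality of $N$, to $\varphi_N(x,\varphi_N(y,z)) = \varphi_N(y,\varphi_N(x,z))$, since the only contributing summand on the right is $(i,j)=(N,N)$. The encoded $\widetilde\varphi_N \in \mathcal{D}_{comm}(L)$ then satisfies the commuting-composition hypothesis of condition (C), forcing $\widetilde\varphi_N = 0$ and hence $\varphi_N = 0$, a contradiction. The symmetric argument on the minimal degree disposes of $k<0$. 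With $\varphi = \varphi_0$, the encoding now yields $\psi := \widetilde\varphi_0 \in \mathcal{D}_{comm}(L)$ with $\varphi(x\otimes t^i, y\otimes t^j) = \psi(x,y)\otimes t^{i+j}$, and the axioms (\ref{eq-comm1})--(\ref{eq-nonlin}) for $\varphi$ translate directly into the corresponding CPA axioms for $\psi$.
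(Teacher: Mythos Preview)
Your proposal is correct and follows essentially the same route as the paper: decompose $\Phi$ into $\mathbb Z$-homogeneous pieces via Proposition~\ref{prop-33}, encode each piece by a bilinear map on $L$, kill the extremal nonzero degrees by the top/bottom-degree argument together with condition~(C), and identify the surviving degree-zero piece with a CPA-structure on $L$. The one noteworthy difference is that you explicitly isolate the ``rigidity step'' (that $\widetilde\varphi_k(x,y)$ is independent of the chosen lifts $i\in\bar i$, $j\in\bar j$) and flag it as the main obstacle, whereas the paper simply writes $\Phi_\ell(x\otimes t^i,y\otimes t^j)=\varphi_\ell(x,y)\otimes t^{i+j+\ell}$ without comment; your sketch of this step via the derivation identity with varying splittings of the second argument is the right idea, though neither you nor the paper spells it out in full.
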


\begin{proof}
Let $\Phi$ be a CPA-structure on $\widehat L$. By Proposition \ref{prop-33},
\begin{equation}\label{eq-sum1}
\Phi = \sum_\ell \Phi_\ell ,
\end{equation}
where $\Phi_\ell$ is an element of $\mathcal D_{comm}(\widehat L)$ of degree 
$\ell$, i.e. 
$$
\Phi_\ell(x \otimes t^i, y \otimes t^j) = 
\varphi_\ell(x,y) \otimes t^{i + j + \ell}
$$
for any $x \in L_{i(mod\, n)}$, $y \in L_{j(mod\, n)}$, and some bilinear map 
$\varphi_\ell: L \times L \to L$ of degree $\ell(mod\,n)$. The commutativity of 
$\Phi_\ell$ is equivalent to the commutativity of $\varphi_\ell$, and the 
condition (\ref{eq-1}) for $\Phi_\ell$ is equivalent to the same condition for 
$\varphi_\ell$, whence $\varphi_\ell \in \mathcal D_{comm}(L)$ for any $\ell$.

The condition (\ref{eq-nonlin}) for $\Phi$ is equivalent to
\begin{equation}\label{eq-ls}
\sum_\ell \varphi_\ell(x,[y,z]) \otimes t^{i + j + k + \ell} =
\sum_m \sum_s
\big(\varphi_m(x,\varphi_s(y,z)) - \varphi_s(y,\varphi_m(x,z))\big) 
\otimes t^{i + j + k + m + s}
\end{equation}
for any $x \in L_{i(mod\, n)}$, $y \in L_{j(mod\, n)}$, $z \in L_{k(mod\, n)}$,
and $i,j,k \in \mathbb Z$. Assume the sum (\ref{eq-sum1}) contains elements 
of positive degree, and let $N$ be the largest such degree. The maximal possible
degree of a summand at the left-hand side of the equality (\ref{eq-ls}) is 
$i + j + k + N$, while at the right-hand side the summand 
\begin{equation}\label{eq-e1}
\varphi_N(x,\varphi_N(y,z)) - \varphi_N(y,\varphi_N(x,z))
\end{equation}
has degree $i + j + k + 2N$, and all other summands have a smaller degree. 
Consequently, the expression (\ref{eq-e1}) vanishes for any $x$, $y$, $z$ 
belonging to arbitrary homogeneous components of $L$, and hence vanishes for any
$x,y,z \in L$. But then $\varphi_N$ vanishes, a contradiction. The same 
reasoning shows that the sum (\ref{eq-sum1}) does not contain summands of 
negative degree, and hence $\Phi$ is of degree zero, i.e. of the form 
(\ref{eq-ij}). In this situation, the condition (\ref{eq-nonlin}) for $\Phi$
is equivalent to the condition (\ref{eq-nonlin}) for $\varphi$.
\end{proof}

\begin{remark}
An analogous result can be obtained by replacing the Laurent polynomial algebra
by the (ordinary) polynomial algebra $K[t]$, or any similar graded 
polynomial-like algebra.
\end{remark}

\begin{lemma}\label{cor-3}
Let $L$ be a Lie algebra such that:
\begin{enumerate}[\upshape(i)]
\item\label{it-c1}
$L$ is centerless;
\item\label{it-c2} 
all derivations of $L$ are inner;
\item\label{it-c3}
any linear map $\omega$ from $L$ to an abelian subalgebra of $L$, satisfying the
condition 
$$
[\omega(x),y] + [x,\omega(y)] = 0
$$
for any $x,y\in L$, vanishes.
\end{enumerate}
Then $L$ satisfies the condition (*).
\end{lemma}

\begin{proof}
Since all derivations of $L$ are inner, any map 
$\varphi \in \mathcal D_{comm}(L)$ has the form $\varphi(x,y) = [y,\omega(x)]$ 
for $x,y\in L$ and some linear map $\omega: L \to L$. Since $\varphi$ is
symmetric, we have $[\omega(x),y] + [x,\omega(y)] = 0$. The condition
$\varphi(x,\varphi(y,z)) = \varphi(y,\varphi(x,z))$ is equivalent then to 
$[[z,\omega(y)],\omega(x)] = [[z,\omega(x)],\omega(y)]$ what, together with the
Jacobi identity and the fact that $L$ is centerless, implies 
$[\omega(x),\omega(y)] = 0$, i.e. $\omega(L)$ is an abelian subalgebra in $L$.
Now (\ref{it-c3}) implies that $\omega$, and hence $\varphi$, vanishes, 
i.e. $L$ satisfies the condition (*).
\end{proof}

Now we can prove one of our main results.

\begin{theorem}\label{th-2}
Let
\[ 
\mathfrak g = \bigoplus_{i\in \mathbb Z / n\mathbb Z} \mathfrak g_i
\] 
be a $\mathbb Z / n\mathbb Z$-graded simple finite-dimensional complex Lie 
algebra. Then any CPA-structure on the associated twisted loop algebra
$\widehat{\mathfrak g}$ vanishes.
\end{theorem}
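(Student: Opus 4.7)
The plan is to invoke Proposition~\ref{prop-p}, which reduces the question to CPA-structures on the finite-dimensional Lie algebra $\mathfrak g$ itself. Since $\mathfrak g$ is simple over $\mathbb C$, hence perfect, the main result of \cite{BU52} says every CPA-structure on $\mathfrak g$ is trivial, so via the correspondence~(\ref{eq-ij}) the theorem will follow at once. The real content is therefore the verification of the hypothesis of Proposition~\ref{prop-p}, namely that $\mathfrak g$ satisfies condition~(C).

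To check (C), I would apply Corollary~\ref{cor-3} and verify its three items (i)--(iii). Items (i) and (ii) are standard for a finite-dimensional simple complex Lie algebra: simplicity implies a trivial center, and Whitehead's first lemma gives $\operatorname{Der}(\mathfrak g) = \ad(\mathfrak g)$. What remains is (iii): any linear map $\omega\colon \mathfrak g \to A$ with $A \subseteq \mathfrak g$ abelian and $[\omega(x), y] + [x, \omega(y)] = 0$ for all $x, y$ must vanish.

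For (iii) I would exploit the Killing form $\kappa$, which is non-degenerate and invariant on $\mathfrak g$. Pairing the identity $[\omega(x),y] + [x,\omega(y)] = 0$ with an arbitrary $z \in \mathfrak g$ and using the invariance $\kappa([a,b],c) = \kappa(a,[b,c])$ twice, one rewrites the condition as $\kappa(\omega(x), [y,z]) = \kappa(\omega(y), [x,z])$. Hence the trilinear form $T(x,y,z) := \kappa(\omega(x), [y,z])$ is symmetric in $(x,y)$, while being visibly antisymmetric in $(y,z)$. Chasing these two symmetries through the six permutations of $(x,y,z)$ gives $T(x,y,z) = -T(x,y,z)$, so $T \equiv 0$ in characteristic different from $2$. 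Combined with the perfectness of $\mathfrak g$ and non-degeneracy of $\kappa$, this forces $\omega = 0$.

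The main obstacle I anticipate is item (iii). One might alternatively attempt a direct root-space analysis, but this would require separate treatment of various classes of abelian subalgebras of $\mathfrak g$ (Cartan subalgebras, root spaces, and more exotic ones); the invariant-form approach sketched above is appealing because it uses only the abstract features that $\mathfrak g$ is perfect, centerless, and carries a non-degenerate invariant symmetric bilinear form, and it avoids any case analysis of the underlying root system.
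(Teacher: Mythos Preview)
Your proposal is correct and follows the same overall route as the paper: verify condition~(C) for $\mathfrak g$ via Corollary~\ref{cor-3}, invoke Proposition~\ref{prop-p} to reduce to CPA-structures on $\mathfrak g$, and then appeal to the known triviality of CPA-structures on finite-dimensional simple (or perfect) Lie algebras from \cite{BU51,BU52}.

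The one point of divergence is how item~(iii) is handled. The paper simply cites Lemma~6.1 of Leger--Luks~\cite{LEL}, which asserts that \emph{any} linear map $\omega\colon \mathfrak g \to \mathfrak g$ satisfying $[\omega(x),y] + [x,\omega(y)] = 0$ vanishes (no abelian-image hypothesis needed). Your Killing-form argument proves exactly this stronger statement as well: the chain of symmetries on $T(x,y,z) = \kappa(\omega(x),[y,z])$ nowhere uses that $\omega(\mathfrak g)$ is abelian. So you recover the content of the cited lemma by an elementary, self-contained computation that relies only on perfectness and a non-degenerate invariant form---a pleasant gain in transparency over the citation, at essentially no extra cost.
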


\begin{proof}
The Lie algebra $\mathfrak g$ satisfies the conditions of Lemma \ref{cor-3}.
Indeed, (\ref{it-c1}) is evident and (\ref{it-c2}) is well-known. According to
Lemma 6.1 in \cite{LEL}, \emph{any} linear map  
$\omega: \mathfrak g \to \mathfrak g$ satisfying 
$[\omega(x),y] + [x,\omega(y)] = 0$ for any $x,y \in \mathfrak g$, vanishes, 
thus (\ref{it-c3}) is satisfied. Therefore, $\mathfrak g$ satisfies the 
condition (*), and by Proposition \ref{prop-p} the set of CPA-structures on 
$\widehat{\mathfrak g}$ is in bijection with the set of CPA-structures on 
$\mathfrak g$. But any CPA-structure on $\mathfrak g$ vanishes according to 
Proposition 5.4 of \cite{BU51}, or Proposition 3.1 of \cite{BU52}.
\end{proof}

\begin{remarks}\hfill

(i)
On practice, only the cases $n=1$ (the untwisted case) and $n=2,3$ (the twisted
case) may occur; see Chapter $8$ in \cite{KAC}.

(ii)
According to Theorem $3.3$ of \cite{BU52}, CPA-structures vanish not only on 
$\mathfrak g$, but on any perfect finite-dimensional Lie algebra over a field of characteristic zero. The twisted loop algebra $\widehat{\mathfrak g}$ is 
perfect, but not finite-dimensional, so Theorem \ref{th-2} is not covered, at 
least in a straightforward way, by that result.
\end{remarks}

\section{Digression: graded Lie algebras, Witt algebras, and current algebras}

This section contains some comments on and alternative approaches to the results
of the previous section. The proofs are omitted, and the results stated here 
will be not used in the next section.

In the previous section we took advantage of the graded structure of twisted
loop algebras, which, being coupled with the nonlinear condition 
(\ref{eq-nonlin}), implies a strong restriction on CPA-structures. One of the 
possible generalizations along these line is:

\begin{proposition}\label{prop-gr}
Let $L$ be a $\mathbb Z$-graded Lie algebra $L = \bigoplus_{i\in \mathbb Z} L_i$
satisfying the condition (*). Then any CPA-structure $\varphi$ on $L$ is of 
degree $0$: $\varphi(L_i,L_j) \subseteq L_{i+j}$ for any $i,j \in \mathbb Z$.
\end{proposition}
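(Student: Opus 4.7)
The plan is to mimic the degree-comparison argument already used in the proof of Proposition \ref{prop-p}, now applied directly to the $\mathbb Z$-grading of $L$ itself rather than to the induced $\mathbb Z$-grading on a twisted loop algebra. The observation is that the proof of Proposition \ref{prop-p} never really exploits the loop-algebra structure beyond the grading: a $\mathbb Z$-grading together with condition~(C) and the nonlinear identity (\ref{eq-nonlin}) is already enough to force degree zero.

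First I would invoke Proposition \ref{prop-33} to decompose $\varphi = \sum_\ell \varphi_\ell$ with $\varphi_\ell \in \mathcal D_{comm}(L)$ of degree $\ell$, so the goal reduces to showing that $\varphi_\ell = 0$ for every $\ell \neq 0$. Next I would plug homogeneous $x \in L_i$, $y \in L_j$, $z \in L_k$ into the nonlinear condition (\ref{eq-nonlin}). Expanding both sides by degree, the left-hand side produces summands $\varphi_\ell([x,y],z) \in L_{i+j+k+\ell}$, while the right-hand side produces summands $\varphi_\ell(x,\varphi_s(y,z)) - \varphi_\ell(y,\varphi_s(x,z)) \in L_{i+j+k+\ell+s}$.

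Assuming there is a largest positive degree $N$ with $\varphi_N \neq 0$, the degree-$(i+j+k+2N)$ component can arise only on the right-hand side, and only from the single pair $\ell = s = N$. Equating it to zero yields $\varphi_N(x,\varphi_N(y,z)) = \varphi_N(y,\varphi_N(x,z))$ for all homogeneous triples, and hence for all of $L$ by bilinearity. Since $\varphi_N$ belongs to $\mathcal D_{comm}(L)$, condition~(C) then forces $\varphi_N = 0$, contradicting the choice of $N$. A symmetric argument, extracting instead the most negative degree $M$ and comparing the degree-$(i+j+k+2M)$ component, eliminates components of strictly negative degree, leaving $\varphi = \varphi_0$.

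The main obstacle, exactly as in Proposition \ref{prop-p}, is really just the tacit finiteness of the decomposition $\sum_\ell \varphi_\ell$, i.e.\ the existence of the ``largest positive degree'' $N$ (and symmetrically a smallest negative one). This is already bundled into the direct-sum statement of Proposition \ref{prop-33}, and no new input is required beyond what was implicitly used in the loop-algebra case.
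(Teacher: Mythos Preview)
Your proposal is correct and is precisely the argument the paper has in mind: the paper omits the proof, saying only that it ``is straightforward and is similar to the proof of Proposition~\ref{prop-p}'', and your write-up is exactly that transplantation of the degree-comparison argument, including the appeal to Proposition~\ref{prop-33} for the homogeneous decomposition and the use of condition~(C) to kill the extremal component. Your closing remark about finiteness of the sum being implicit in the direct-sum formulation of Proposition~\ref{prop-33} is also on point.
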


The proof is straightforward and is similar to the proof of 
Proposition \ref{prop-p}. In the situation of twisted loop algebras it was more 
convenient to use a somewhat more specific Proposition \ref{prop-p}, and not 
this general result. 

However, Proposition \ref{prop-gr} can be used to establish the vanishing of 
CPA-structures on Witt algebras in a somewhat different way than this was done 
in \cite{TAN}. Recall that the Witt algebras are defined as Lie algebras over a
field $K$ of characteristic zero, having a set of basis vectors $\{e_i\}$ with multiplication
$$
[e_i,e_j] = (j-i) e_{i+j} .
$$
Depending on whether the indices run over all integers, or over integers 
$\ge -1$, we get the two-sided or one-sided Witt algebra, respectively.

\begin{theorem}
Any CPA-structure on a Witt algebra (one- or two-sided) vanishes.
\end{theorem}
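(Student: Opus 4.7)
The plan is to apply Proposition \ref{prop-gr} to the $\mathbb Z$-grading of the Witt algebra $W$ defined by $\deg e_i = i$. Once we verify that $W$ satisfies condition (C), the proposition forces any CPA-structure $\varphi$ to be of degree zero, so $\varphi(e_i,e_j) = c_{ij}\,e_{i+j}$ with $c_{ij} = c_{ji}$. A direct computation will then show that every $c_{ij}$ vanishes.

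To verify condition (C), I invoke Corollary \ref{cor-3}. The Witt algebra is centerless, as $[e_0,e_i] = i e_i$ separates basis vectors, and every derivation of $W$ is inner---a classical fact, valid for both the one-sided and two-sided cases in characteristic zero. The substantive step is condition (iii): one must show that any linear map $\omega: W \to W$ satisfying $[\omega(x),y] + [x,\omega(y)] = 0$ must vanish. I decompose $\omega$ into its $\mathbb Z$-homogeneous components $\omega_k$ with $\omega_k(e_i) = a_i e_{i+k}$, and the defining relation becomes $a_i(j - i - k) + a_j(j + k - i) = 0$ for all admissible $i,j$. Setting $j = 0$ expresses each $a_i$ as an explicit linear function of $a_0$; substituting the resulting formula back in for one further value of $j$ produces an overdetermined condition that forces $a_0 = 0$, so $\omega_k = 0$ for every $k$.

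After that, Proposition \ref{prop-gr} gives $\varphi(e_i,e_j) = c_{ij} e_{i+j}$. Expanding the derivation identity (\ref{eq-1}) at $j = 0$ yields $-i c_{ik} = (k - i) c_{i,0}$ for every $i \neq 0$, and then commutativity $c_{ik} = c_{ki}$, taken at two distinct choices of a second index (e.g.\ $k = 1$ and $k = 2$), forces $c_{i,0} = 0$ for all $i \neq 0$, whence $c_{ik} = 0$ whenever $i \neq 0$. The remaining coefficients $c_{0,j}$ satisfy the Cauchy-type recursion $c_{0,j+k} = c_{0,j} + c_{0,k}$ (for $j \neq k$) obtained from (\ref{eq-1}) with $i = 0$; commutativity gives $c_{0,j} = c_{j,0} = 0$ for $j \neq 0$, and a single specialization (e.g.\ $j = -1$, $k = 1$, which is admissible in both the one-sided and two-sided settings) handles $c_{0,0}$. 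The one-sided and two-sided cases proceed identically, modulo the restriction $i \geq -1$.

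The main obstacle is condition (iii) of Corollary \ref{cor-3}, which demands a careful basis-level case analysis; the remaining arguments are essentially formal once the degree-zero reduction is in place.
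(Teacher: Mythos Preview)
Your argument is correct and follows the same overall route as the paper: verify condition (C) via the ingredients of Corollary~\ref{cor-3}, apply Proposition~\ref{prop-gr} to force degree zero, and then finish by a short computation. The only substantive difference is in how condition (iii) is checked. The paper invokes the structural fact that every abelian subalgebra of a Witt algebra is one-dimensional: once the reasoning of Corollary~\ref{cor-3} shows that $\omega(W)$ is abelian, the image of $\omega$ lies in a single line $K e$, and the relation $f(x)[e,y]=f(y)[e,x]$ (writing $\omega(x)=f(x)e$) contradicts the fact that $\ad e$ has infinite-dimensional image. You instead establish the stronger statement that any $\omega$ with $[\omega(x),y]+[x,\omega(y)]=0$ vanishes, by decomposing $\omega$ into $\mathbb Z$-homogeneous pieces and solving the resulting linear recursion on coefficients. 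Both arguments are valid; the paper's is shorter but uses an extra classical fact, while yours is self-contained at the cost of a basis calculation (with a few edge cases---e.g.\ $i=-k$ in the two-sided case, and the index restrictions in the one-sided case---that you should make explicit when writing it out in full). After Proposition~\ref{prop-gr}, your $c_{ij}$ computation is fine, though note that $c_{0,0}=0$ already falls out of your relation $-i\,c_{ik}=(k-i)c_{i,0}$ at $i=0$, $k\neq 0$, so the separate Cauchy-recursion step is not really needed.
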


The proof is obtained by an easy combination of reasonings as in the proof of 
Corollary \ref{cor-3}, the facts that all derivations of a Witt algebra are 
inner, and all abelian subalgebras are one-dimensional, and 
Proposition \ref{prop-gr}.

Picking up another thread in \S \ref{sec-loop}, let us outline an alternative
approach to the proof of Theorem \ref{th-2}; it takes advantage of the fact that
one of the defining conditions of the post-Lie algebra is linear, and employs a
linear-algebraic technique from \cite{ZUS}, used earlier in \cite{ZUS} and 
\cite{ZUS2} to describe other kinds of linear structures on current Lie algebras, such as derivations, low-degree 
cohomology, Poisson structures, etc., in terms of some invariants of the tensor
factors. 

Before we formulate the corresponding statement, a few definitions are in order.
Recall that the \emph{centroid} of a Lie algebra $L$ is the space of linear maps
$\varphi: L \to L$ commuting with inner derivations of $L$, i.e. 
$$
\varphi([x,y]) = [\varphi(x),y]
$$
for any $x,y\in L$. A Lie algebra is called \emph{central}, if its centroid
coincides with multiplications by the elements of the base field.

The set of all bilinear maps $\varphi: L \times L \to L$ such that for any 
$x \in L$, the linear map $\varphi(x,\cdot): L \to L$ belongs to the centroid of
$L$, i.e.,
$$
\varphi(x,[y,z]) = [\varphi(x,y),z]
$$
for any $x,y,z\in L$, forms a vector space denoted by $\mathcal C(L)$.

For a vector space $V$, $\End(V)$ denotes the set of all linear maps $V \to V$.

\begin{proposition}\label{prop-1}
Let $L$ be a centerless Lie algebra, $A$ an associative commutative algebra with
unit, and one of $L$, $A$ is finite-dimensional. Then 
\begin{equation}\label{eq-i}
\mathcal D(L \otimes A) \simeq 
\Big(\mathcal D(L) \otimes \End(A)\Big) \oplus 
\Big(\mathcal C(L) \otimes \mathcal D(A)\Big) .
\end{equation}
Each element of $\mathcal D(L \otimes A)$ can be written as a sum of maps of 
the form $\varphi \otimes \alpha$, where $\varphi: L \times L \to L$ and 
$\alpha: A \times A \to A$ are bilinear maps, of the two kinds:
\begin{enumerate}[\upshape(i)]
\item 
$\varphi \in \mathcal D(L)$, and $\alpha(a,b) = \beta(a)b$ for any $a,b \in A$ 
and some linear map $\beta: A \to A$;
\item
$\varphi \in \mathcal C(L)$, and $\alpha \in \mathcal D(A)$.
\end{enumerate}
\end{proposition}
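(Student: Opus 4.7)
The plan is to reinterpret an element $\Phi \in \mathcal D(L \otimes A)$ as a linear map into the derivation algebra $\operatorname{Der}(L \otimes A)$, split the target using the known structural decomposition of derivations of a current Lie algebra, and repackage the pieces via a Hom-tensor identity. Write $\operatorname{Der}(M)$ for the derivation algebra of a Lie algebra $M$ and $\operatorname{Cent}(L)$ for the centroid of $L$. Tautologically, $\mathcal D(L \otimes A) \simeq \operatorname{Hom}(L \otimes A, \operatorname{Der}(L \otimes A))$, via $\Phi \mapsto (u \mapsto \Phi(u,\cdot))$.

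The central, and only content-rich, input is the decomposition of derivations of a current Lie algebra from \cite{ZUS}: under the present hypotheses,
$$
\operatorname{Der}(L \otimes A) \simeq \bigl(\operatorname{Der}(L) \otimes A\bigr) \oplus \bigl(\operatorname{Cent}(L) \otimes \operatorname{Der}(A)\bigr),
$$
where $d \otimes a_0$ in the first summand acts on $L \otimes A$ by $y \otimes b \mapsto d(y) \otimes a_0 b$, and $c \otimes \delta$ in the second by $y \otimes b \mapsto c(y) \otimes \delta(b)$. Combining with the tautology above, $\mathcal D(L \otimes A)$ breaks into $\operatorname{Hom}(L \otimes A, \operatorname{Der}(L) \otimes A) \oplus \operatorname{Hom}(L \otimes A, \operatorname{Cent}(L) \otimes \operatorname{Der}(A))$.

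It remains to apply the Hom-tensor exchange $\operatorname{Hom}(V_1 \otimes V_2, W_1 \otimes W_2) \simeq \operatorname{Hom}(V_1, W_1) \otimes \operatorname{Hom}(V_2, W_2)$, which is legitimate here because the finite-dimensionality hypothesis propagates through: when $L$ is finite-dimensional so are $\operatorname{Der}(L)$ and $\operatorname{Cent}(L)$, and when $A$ is finite-dimensional so is $\operatorname{Der}(A)$. Applied to $(V_1,V_2,W_1,W_2) = (L,A,\operatorname{Der}(L),A)$ and $(L,A,\operatorname{Cent}(L),\operatorname{Der}(A))$, together with the tautological identifications $\operatorname{Hom}(L,\operatorname{Der}(L)) = \mathcal D(L)$, $\operatorname{Hom}(A,A) = \End(A)$, $\operatorname{Hom}(L,\operatorname{Cent}(L)) = \mathcal C(L)$, and $\operatorname{Hom}(A,\operatorname{Der}(A)) = \mathcal D(A)$, this yields the isomorphism (\ref{eq-i}). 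Tracing the identifications shows that an elementary tensor $\varphi \otimes \beta \in \mathcal D(L) \otimes \End(A)$ corresponds to the bilinear map $(x \otimes a, y \otimes b) \mapsto \varphi(x,y) \otimes \beta(a) b$, exactly kind (i) with $\alpha(a,b) = \beta(a) b$, while $\varphi \otimes \alpha \in \mathcal C(L) \otimes \mathcal D(A)$ corresponds to $(x \otimes a, y \otimes b) \mapsto \varphi(x,y) \otimes \alpha(a,b)$, kind (ii).

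The principal obstacle is the derivation decomposition itself: it encapsulates the substantive work and, when reproduced from scratch, uses centerlessness of $L$ and the unit of $A$ to pin down the split of an arbitrary derivation of $L \otimes A$ along the two factors. The surrounding Hom-tensor manipulations are purely formal and owe their validity entirely to the finite-dimensionality hypothesis on one of the tensor factors.
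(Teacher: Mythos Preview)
Your argument is correct, and it takes a somewhat different route from the one the paper sketches. The paper does not actually write out a proof; it only says the argument imitates that of Corollary~2.2 in \cite{ZUS}, i.e.\ one is meant to rerun, for the bilinear spaces $\mathcal D(\,\cdot\,)$, the same decomposition computation done there for $\operatorname{Der}(\,\cdot\,)$. You instead use that derivation decomposition as a black box, combine it with the tautology $\mathcal D(M)\simeq\operatorname{Hom}(M,\operatorname{Der}(M))$, and then push the direct sum through $\operatorname{Hom}$ and apply the Hom--tensor exchange $\operatorname{Hom}(V_1\otimes V_2,W_1\otimes W_2)\simeq\operatorname{Hom}(V_1,W_1)\otimes\operatorname{Hom}(V_2,W_2)$, legitimate because on whichever side is finite-dimensional both the domain factor and the target factor are. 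This is more economical: the only substantive input is the cited result, and everything else is formal linear algebra; it also makes transparent why the finite-dimensionality hypothesis on one factor is exactly what is needed. The paper's intended approach, by contrast, would redo the splitting computation from scratch for bilinear maps, which is more laborious but has the virtue of exhibiting directly where centerlessness of $L$ and the unit of $A$ are used.
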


The proof is very similar to the proof of the formula from Corollary 2.2 in 
\cite{ZUS} expressing derivations of a current Lie algebra in terms of its tensor factors. The condition that $L$ is centerless 
is not crucial and can be removed at the expense of more laborious computations and cumbersome formulas.

Similarly, we have:

\begin{proposition}\label{prop-2}
In the setup of Proposition \ref{prop-1}, assume additionally that $L$ is
central. Then
$$
\mathcal D_{comm}(L \otimes A) \simeq \mathcal D_{comm}(L) \otimes A .
$$
Each element of $\mathcal D_{comm} (L \otimes A)$ can be written as the sum of
maps of the form $\varphi \otimes \alpha$, where 
$\varphi \in \mathcal D_{comm}(L)$, and $\alpha: A \times A \to A$ is a bilinear
map of the form $\alpha(a,b) = abu$ for some $u \in A$.
\end{proposition}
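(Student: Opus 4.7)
The strategy is to exhibit an explicit linear map
$$
\iota\colon \mathcal D_{comm}(L) \otimes A \to \mathcal D_{comm}(L \otimes A),\qquad \varphi \otimes u \mapsto \Phi_{\varphi, u},
$$
with $\Phi_{\varphi, u}(x \otimes a, y \otimes b) := \varphi(x, y) \otimes uab$, and to show it is an isomorphism. That $\Phi_{\varphi, u} \in \mathcal D_{comm}(L \otimes A)$ is a direct verification: symmetry uses the commutativity of $\varphi$ and of $A$, the derivation condition in the second slot uses the derivation property of $\varphi$ together with the associativity of $A$. Injectivity of $\iota$ follows from evaluation at $a = b = 1$, which reduces to the standard injectivity $\mathcal D_{comm}(L) \otimes A \hookrightarrow L \otimes L \otimes A$ induced by choosing a basis of $A$.

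For surjectivity, start with $\Phi \in \mathcal D_{comm}(L \otimes A) \subseteq \mathcal D(L \otimes A)$. Proposition \ref{prop-1}, together with the centrality of $L$ (which identifies $\mathcal C(L)$ with the space of maps $\psi(x, y) = \lambda(x) y$ for $\lambda \in L^*$, since the centroid of a central Lie algebra is $K \cdot \mathrm{id}$), gives a decomposition
$$
\Phi(x \otimes a, y \otimes b) = \sum_i \varphi_i(x, y) \otimes \beta_i(a) b + \sum_j \lambda_j(x)\, y \otimes \gamma_j(a, b),
$$
with $\varphi_i \in \mathcal D(L)$, $\beta_i \in \End(A)$, $\lambda_j \in L^*$, $\gamma_j \in \mathcal D(A)$. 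Using the direct-sum splitting $\End(A) = \{a \mapsto ua : u \in A\} \oplus \{\beta : \beta(1) = 0\}$, I refine the type-(i) part into a \emph{multiplication} piece $\sum_i \varphi_i \otimes \mu_{u_i}$ (with $\mu_u(a) = ua$ and $\{u_i\}$ linearly independent in $A$) and a \emph{residual} piece $\sum_k \psi_k \otimes \beta_k^0$ (with $\beta_k^0(1) = 0$ and $\{\beta_k^0\}$ linearly independent), and similarly normalize so that $\{\gamma_j\}$ is linearly independent in $\mathcal D(A)$.

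The symmetry $\Phi(u, v) = \Phi(v, u)$ is now applied in stages, exploiting the unit of $A$ and the identity $\gamma(\cdot, 1) = 0$ for any $\gamma \in \mathcal D(A)$. Specializing $a = b = 1$ wipes out both the residual-(i) and the type-(ii) contributions and, by linear independence of $\{u_i\}$, forces each $\varphi_i \in \mathcal D_{comm}(L)$; the multiplication part is then already symmetric, so what remains of the symmetry equation involves only the $\psi_k$'s and the type-(ii) summand. Specializing $b = 1$ and projecting onto $(L/Kx) \otimes A$ for $x, y \in L$ linearly independent (possible since $L$ centerless forces $\dim L \ge 2$) gives, via linear independence of $\{\beta_k^0\}$, that $\psi_k(x, y) \in Kx$; the parallel argument with $a = 1$ yields $\psi_k(y, x) \in Ky$, and bilinearity combined with the derivation identity $\psi_k(x, [y, z]) = [\psi_k(x, y), z] + [y, \psi_k(x, z)]$ forces $\psi_k \equiv 0$. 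With the residual gone, the remaining equation $\sum_j(\lambda_j(x) y \otimes \gamma_j(a, b) - \lambda_j(y) x \otimes \gamma_j(b, a)) = 0$, projected in the same way, gives $\sum_j \lambda_j(x) \gamma_j(a, b) = 0$ in $A$ for all $x$, $a$, $b$, and linear independence of $\{\gamma_j\}$ forces $\lambda_j = 0$. Thus $\Phi = \sum_i \iota(\varphi_i \otimes u_i)$, and surjectivity is complete. The most delicate step is the elimination of $\psi_k$: turning the rank-one constraints $\psi_k(x, y) \in Kx$ and $\psi_k(y, x) \in Ky$ into $\psi_k \equiv 0$ requires genuine use of the Lie structure of $L$ beyond mere centrality and centerlessness, via the derivation identity above.
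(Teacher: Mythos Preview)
The paper explicitly omits the proof of this proposition: the surrounding section announces that ``the proofs are omitted, and the results stated here will be not used in the next section'', offering only the hint that the argument parallels Proposition~\ref{prop-1} and the computation in \cite{ZUS}. So there is no written proof to compare against. Your strategy---invoke Proposition~\ref{prop-1} to decompose an arbitrary $\Phi\in\mathcal D(L\otimes A)$, use centrality to identify $\mathcal C(L)$ with $L^*$, and then impose commutativity to kill the unwanted summands---is exactly the natural way to carry out the hinted argument, and it works.

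One point deserves tightening. The two constraints you extract, $\psi_k(x,y)\in Kx$ (from $b=1$) and $\psi_k(y,x)\in Ky$ (from $a=1$), are the \emph{same} statement after relabeling $x\leftrightarrow y$; neither specialization yields $\psi_k(x,y)\in Ky$, so you cannot simply intersect. You are therefore right that the derivation identity is genuinely needed, but the mechanism should be spelled out: the constraint $\psi_k(x,y)\in Kx$ for all $x,y$ forces $\psi_k(x,y)=g_k(y)\,x$ with $g_k\in L^*$, and then the condition $\psi_k\in\mathcal D(L)$ reads
\[
g_k([y,z])\,x \;=\; g_k(y)\,[x,z] - g_k(z)\,[x,y]\quad\text{for all }x,y,z\in L .
\]
From this one deduces $[x,\ker g_k]\subseteq Kx$ for every $x$; comparing this for two independent $x$'s shows that each $z\in\ker g_k$ acts on $L$ as a scalar, hence lies in $\Z(L)=0$, forcing $g_k=0$ and thus $\psi_k=0$. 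With that step filled in, your argument is complete and correct.
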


Then, using Proposition \ref{prop-2}, we may impose on 
$\mathcal D_{comm}(L \otimes A)$ the additional condition (\ref{eq-nonlin}) to 
try to get an analogous formula for the set of CPA-structures on $L \otimes A$.
However, the nonlinearity of (\ref{eq-nonlin}) makes the task much more 
difficult, and we seemingly have to abandon the generality of 
Propositions \ref{prop-1} and \ref{prop-2}, and assume $A$ to be a more or less
concrete algebra. This provides a somewhat alternative way to the results of 
\S \ref{sec-loop}, at least in the nontwisted case.

\section{Kac--Moody algebras}

We use the well-known realization of untwisted and twisted affine Kac--Moody 
algebras as extensions of current Lie algebras by a derivation and a central 
element:
\begin{equation}\label{eq-a}
\widehat{\mathfrak g} \oplus 
\mathbb C t\frac{\dcobound}{\dcobound t} \oplus \mathbb Cz \>, 
\end{equation}
where $\widehat{\mathfrak g}$ denotes, as previously, the twisted loop algebra 
associated to a $\mathbb Z / n\mathbb Z$-graded simple 
finite-di\-men\-si\-o\-nal complex Lie algebra $\mathfrak g$. The Euler 
derivation $t\frac{\dcobound}{\dcobound t}$ acts on the current Lie algebra 
$\mathfrak g \otimes \mathbb C[t,t^{-1}]$, and, by restriction, on 
$\widehat g \subseteq \mathfrak g \otimes \mathbb C[t,t^{-1}]$, as the 
derivation of the second tensor factor, i.e.
$$
[x \otimes t^i, t\frac{\dcobound}{\dcobound t}] = i x \otimes t^i ,
$$
where $x \in L$ and $i \in \mathbb Z$. The element $z$ is central, and 
multiplication on $\widehat{\mathfrak g}$ is twisted by the $\mathbb C z$-valued
``Kac--Moody cocycle'' (whose concrete form is not important for us here).

\begin{lemma}\label{lemma-1}
Let $L$ be a perfect centerless Lie algebra such that any CPA-structure on $L$
vanishes, $D$ an outer derivation of $L$, and $\mathscr L = L \oplus KD$ be the
semidirect sum with $D$ acting on $L$. Then any CPA-structure on $\mathscr L$ 
vanishes.
\end{lemma}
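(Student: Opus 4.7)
The plan is to analyze an arbitrary $\varphi \in \mathcal D_{comm}(\mathscr L)$ satisfying (\ref{eq-nonlin}) by decomposing every value along the direct sum $\mathscr L = L \oplus KD$, writing $\varphi(a,b) = \psi(a,b) + f(a,b)D$ with $\psi(a,b) \in L$ and $f(a,b) \in K$, and then showing vanishing in three stages: first on inputs from $L \times L$, then on $L \times KD$, and finally on $(D,D)$. Each of the three hypotheses---perfectness of $L$, centerlessness of $L$, and outerness of $D$---enters at a single specific step.

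First I would restrict $\varphi$ to $L \times L$. For $x,y,z \in L$ the right-hand side of (\ref{eq-1}) lies in $L$ since $[D,L] \subseteq L$, so its $KD$-component, namely $f(x,[y,z])$, must vanish; perfectness of $L$ then forces $f \equiv 0$ on $L \times L$. Once $f$ is killed, identities (\ref{eq-comm1}), (\ref{eq-1}), and (\ref{eq-nonlin}) for $\varphi$ transfer verbatim to $\psi \colon L \times L \to L$, which is therefore a CPA-structure on $L$ and vanishes by hypothesis.

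Next, write $\varphi(D,y) = g(y) + h(y)D$ for $y \in L$. Applying (\ref{eq-1}) with first argument $D$ and $y,z \in L$ and separating components, the $KD$-part gives $h([y,z]) = 0$, so $h = 0$ by perfectness, while the $L$-part says $g$ is a derivation of $L$. To eliminate $g$, I would instantiate (\ref{eq-nonlin}) at $x,y \in L$ and $z = D$: the right-hand side becomes $\varphi(x,g(y)) - \varphi(y,g(x))$, which vanishes because $\varphi$ was already shown to be zero on $L \times L$, whereas the left-hand side equals $g([x,y])$. Hence $g$ vanishes on $[L,L] = L$, so $\varphi(D,y) = 0$ for all $y \in L$.

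Finally, set $\varphi(D,D) = u + \lambda D$. Taking (\ref{eq-1}) with $x = y = D$ and $z \in L$, and using the previously established $\varphi(D,L) = 0$, yields $[u,z] + \lambda D(z) = 0$ for all $z \in L$, i.e.\ $\lambda D + \ad(u) = 0$ as derivations of $L$; outerness of $D$ forces $\lambda = 0$, after which centerlessness gives $u \in \Z(L) = 0$. The calculations are essentially bookkeeping; I expect the main subtlety to be at the second step, where one must pick the correct instance of the nonlinear identity (\ref{eq-nonlin}) in order to exploit the already-established vanishing on $L \times L$ and so knock out the derivation $g$.
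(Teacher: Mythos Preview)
Your proof is correct and follows the same decomposition and overall strategy as the paper. The one noteworthy difference is in how you kill $g$ (the paper's $\psi$): you invoke the nonlinear identity (\ref{eq-nonlin}) with $z=D$ together with perfectness to get $g([L,L])=0$, whereas the paper stays with the linear identity (\ref{eq-1}), instantiated at the triple $(x,D,y)$, to obtain $[\psi(x),y]=0$ for all $y$ and then uses centerlessness. Both routes work; the paper's has the mild advantage that only (\ref{eq-1}) is needed at this step, while yours defers the use of centerlessness entirely to the final $(D,D)$ computation.
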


\begin{proof}
Let $\Phi$ be a CPA-structure on $\mathscr L$. We may write 
\begin{alignat*}{2}
&\Phi(x,y) &\>\>=\>\>& \varphi(x,y) + \lambda(x,y)D \\
&\Phi(x,D) &\>\>=\>\>& \psi(x) + \mu(x) D           \\
&\Phi(D,D) &\>\>=\>\>& a + \eta D
\end{alignat*}
where $x,y \in L$, for some (bi)linear maps $\varphi: L \times L \to L$, 
$\lambda: L \times L \to K$, $\psi: L \to L$, $\mu: L \to K$, and $a \in L$, 
$\eta \in K$. Then the symmetricity of $\Phi$ implies the symmetricity of 
$\varphi$, and the condition (\ref{eq-1}) for $\Phi$, written for an arbitrary 
triple $x,y,z\in L$, is equivalent to
$$
\varphi(x,[y,z]) = [\varphi(x,y),z] - [\varphi(x,z),y] 
                 + \lambda(x,y)D(z) - \lambda(x,z)D(y) ,
$$
and to the condition $\lambda(L,[L,L]) = 0$. But since $L$ is perfect, $\lambda$
vanishes, and hence $\varphi \in \mathcal D_{comm}(L)$. Imposing on $\Phi$ the 
condition (\ref{eq-nonlin}) leads to the conclusion that the restriction 
$\varphi = \Phi|_L$ is a CPA-structure on $L$, and hence vanishes.

The condition (\ref{eq-1}) for $\Phi$, written for the triple $D,x,y$, where 
$x,y\in L$, implies $\mu([L,L]) = 0$, whence $\mu$ vanishes. Then writing the 
same condition for the triple $x,D,y$, and taking into account all the vanishing
conditions obtained so far, we get $\psi(L) \subseteq Z(L) = 0$, whence $\psi$
vanishes.

Finally, the condition (\ref{eq-1}) for $\Phi$, written for the triple 
$D,D,x\in L$, yields $\eta D = \ad a$. But since $D$ is an outer derivation, $\eta = 0$, 
$a \in Z(L) = 0$, and $\Phi$ vanishes identically.
\end{proof}

\begin{lemma}\label{lemma-2}
Let $L$ be a centerless Lie algebra such that any CPA-structure on $L$ vanishes,
and $\mathscr L = L \oplus Kz$ a nontrivial one-dimensional central extension of
$L$. Then the set of CPA-structures on $\mathscr L$ consists of $Kz$-valued 
symmetric bilinear maps vanishing whenever one of the arguments belongs to 
$[L,L] \oplus Kz$.
\end{lemma}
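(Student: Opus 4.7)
The plan is to follow the template of the proof of Lemma \ref{lemma-1} while carefully tracking the extra term produced by the defining $2$-cocycle $\omega$ of the central extension: $[x,y]_{\mathscr L} = [x,y]_L + \omega(x,y)z$ for $x,y \in L$, with $\omega$ a Lie algebra $2$-cocycle which by hypothesis is not a coboundary. Decompose any CPA-structure $\Phi$ on $\mathscr L$ as
\begin{align*}
\Phi(x,y) &= \varphi(x,y) + \lambda(x,y)\,z, \\
\Phi(x,z) &= \psi(x) + \mu(x)\,z, \\
\Phi(z,z) &= a + \eta\,z,
\end{align*}
for $x,y \in L$, with $\varphi$ and $\lambda$ symmetric, and aim to pin down all five pieces of data.

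The first step is to reduce the $L$-part $\varphi$ to a CPA-structure on $L$. Applying (\ref{eq-1}) to the triple $(x,y,z)$ with $x,y \in L$, the left-hand side vanishes because $z$ is central, while the right-hand side unpacks to $-[\psi(x),y]_L - \omega(\psi(x),y)z$; centerlessness of $L$ then forces $\psi = 0$. The triple $(z,z,x)$ in (\ref{eq-1}) similarly gives $a = 0$. With these vanishings, the $L$-components of (\ref{eq-1}) and (\ref{eq-nonlin}) restricted to triples in $L$ are precisely the defining identities of a CPA-structure on $L$ for $\varphi$, so $\varphi \equiv 0$ by hypothesis.

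The main obstacle is to dispose of the remaining scalar data $\lambda,\mu,\eta$, which is where nontriviality of the extension has to intervene. With $\varphi = \psi = 0$, the $Kz$-component of (\ref{eq-1}) for a triple $x,y,w \in L$ collapses to
\[
\lambda(x,[y,w]_L) + \mu(x)\,\omega(y,w) = 0.
\]
Read with $x$ fixed, this identifies $\mu(x)\,\omega$ with the Chevalley--Eilenberg coboundary of the $1$-cochain $u \mapsto \lambda(x,u)$. If $\mu(x_0) \neq 0$ for some $x_0 \in L$, then $\omega$ would be a coboundary, contradicting the nontriviality of $\mathscr L$; hence $\mu \equiv 0$, and the same identity then forces $\lambda(x,\cdot)$ to vanish on $[L,L]$ for every $x$. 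Symmetry of $\lambda$ propagates this to the other slot. Finally, feeding $\varphi = \psi = \mu = 0$ and $a = 0$ into (\ref{eq-nonlin}) for the triple $(x,y,z)$ with $x,y \in L$ yields $\omega(x,y)\eta = 0$, whence $\eta = 0$ (any coboundary-free $\omega$ is in particular nonzero).

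This reduces $\Phi$ to a symmetric $Kz$-valued bilinear map vanishing on pairs with an argument in $[L,L] \oplus Kz$, proving one inclusion. The converse is automatic: for any such $\Phi$ both sides of (\ref{eq-1}) and (\ref{eq-nonlin}) vanish identically, since every bracket in $\mathscr L$ already lies in $[L,L] \oplus Kz$, every value of $\Phi$ lies in the central ideal $Kz \subseteq [L,L] \oplus Kz$, and $\Phi$ annihilates any pair containing an element of $Kz$.
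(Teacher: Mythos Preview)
Your proof is correct and follows essentially the same approach as the paper's: the same decomposition of $\Phi$, the same reduction of $\varphi$ to a CPA-structure on $L$ via the $L$-components of (\ref{eq-1}) and (\ref{eq-nonlin}), and the same appeal to nontriviality of the cocycle. The only cosmetic difference is the order in which $\mu$ and $\eta$ are eliminated --- you use nontriviality of $\omega$ to kill $\mu$ (via $\lambda(x,[y,w]) + \mu(x)\omega(y,w)=0$) and then (\ref{eq-nonlin}) on $(x,y,z)$ to kill $\eta$, whereas the paper reverses this, extracting $\eta\,\xi(x,y) = -\mu([x,y])$ from (\ref{eq-1}) on $(z,x,y)$ to kill $\eta$ and then $\mu(x)^2=0$ from (\ref{eq-nonlin}) on $(z,x,x)$ to kill $\mu$.
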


\begin{proof}
Write the Lie bracket on $\mathscr L$ as $\{x,y\} = [x,y] + \xi(x,y)z$, where 
$x,y \in L$, $\liebrack$ is a Lie bracket on $L$, and $\xi$ is a (nontrivial) 
$2$-cocycle on $L$.

Let $\Phi$ be a CPA-structure on $\mathscr L$. Similarly with the proof of 
Lemma \ref{lemma-1}, we may write
\begin{alignat*}{2}
&\Phi(x,y) &\>\>=\>\>& \varphi(x,y) + \lambda(x,y)z \\
&\Phi(x,z) &\>\>=\>\>& \psi(x) + \mu(x) z           \\
&\Phi(z,z) &\>\>=\>\>& a + \eta z
\end{alignat*}
where $x,y \in L$, and all the maps and elements occurring at the right-hand 
side have the same meaning as in the proof of Lemma \ref{lemma-1}.

The symmetricity of $\Phi$ implies the symmetricity of $\varphi$ and $\lambda$.
The condition (\ref{eq-1}) for $\Phi$, written for the triple $x,y,z$, where 
$x,y\in L$, yields $\psi(L) \subseteq Z(L) = 0$, whence $\psi$ vanishes. Then the same condition written for the triple $z,x,y$, yields $a=0$ and 
$\eta\xi(x,y) = -\mu([x,y])$. But since $\xi$ is a nontrivial (i.e., not 
equal to a coboundary) cocycle, the latter equality implies $\eta = 0$.

Further, the condition (\ref{eq-1}) for $\Phi$, written for the triple 
$x,y,t \in L$, yields 
$$
\varphi(x,[y,t]) = [\varphi(x,y),t] - [\varphi(x,t),y] ,
$$
i.e., $\varphi \in \mathcal D_{comm}(L)$, and
\begin{equation}\label{eq-mu}
\lambda(x,[y,t]) + \xi(y,t)\mu(x) = 
\xi(\varphi(x,y),t) - \xi(\varphi(x,t),y) .
\end{equation}

The condition (\ref{eq-nonlin}) for $\Phi$, written for the triple 
$x,y,t\in L$, yields
$$
\varphi([x,y],t) = \varphi(x, \varphi(y,t)) - \varphi(y, \varphi(x,t)) ,
$$
i.e., $\varphi$ is a CPA-structure on $L$, whence $\varphi$ vanishes.

The condition (\ref{eq-nonlin}) for $\Phi$, written for the triple $z,x,x$, 
where $x \in L$, yields $\mu(x)^2 = 0$, whence $\mu$ vanishes. 

To summarize: the only nonzero values of $\Phi$ are given by 
$\Phi(x,y) = \lambda(x,y)z$, where $x,y\in L$. Moreover, (\ref{eq-mu}) now
implies $\lambda(L,[L,L]) = 0$. These are exactly the maps as specified in the 
statement of the lemma. It is straightforward to check that conversely, any
such map is a CPA-structure on $\mathscr L$.
\end{proof}

\begin{theorem}\label{th-22}
CPA-structures on an affine Kac--Moody algebra written in the form (\ref{eq-a}),
form a one-dimensional vector space, spanned by the map:
\begin{gather*}
\left( t \frac{\dcobound}{\dcobound t}, t \frac{\dcobound}{\dcobound t}\right) 
\mapsto z 
\\
\text{all other pairs of basis vectors} \mapsto 0 .
\end{gather*}
\end{theorem}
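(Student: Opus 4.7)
The plan is to bootstrap from the already established vanishing results, applying the two lemmas in sequence. Set $M = \widehat{\mathfrak g} \oplus \mathbb Cz$, the central extension sitting inside the Kac--Moody algebra, and $\mathscr K = M \oplus \mathbb CD$ with $D = t\frac{\dcobound}{\dcobound t}$. First I would apply Lemma \ref{lemma-2} with $L = \widehat{\mathfrak g}$: the hypotheses are met, since $\widehat{\mathfrak g}$ is centerless and all its CPA-structures vanish by Theorem \ref{th-2}. The lemma says any CPA-structure on $M$ is a $\mathbb Cz$-valued commutative map killed by $[\widehat{\mathfrak g},\widehat{\mathfrak g}] \oplus \mathbb Cz$. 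But $\widehat{\mathfrak g}$ is perfect and the Kac--Moody cocycle $\xi$ is surjective onto $\mathbb Cz$, so $[M,M] = M$. Hence every CPA-structure on $M$ vanishes, and $Z(M) = \mathbb Cz$.

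Next I would mimic the proof of Lemma \ref{lemma-1} for the semidirect sum $\mathscr K = M \oplus \mathbb CD$, adjusting for the fact that $M$ is no longer centerless. Write a CPA-structure $\Phi$ on $\mathscr K$ as
\begin{alignat*}{2}
&\Phi(x,y) &\>\>=\>\>& \varphi(x,y) + \lambda(x,y) D \\
&\Phi(x,D) &\>\>=\>\>& \psi(x) + \mu(x) D           \\
&\Phi(D,D) &\>\>=\>\>& a + \eta D
\end{alignat*}
for $x,y \in M$. Commutativity of $\Phi$ gives commutativity of $\varphi$ and $\lambda$. Writing (\ref{eq-1}) for triples in $M \times M \times M$ forces $\lambda(M,[M,M]) = 0$, hence $\lambda = 0$ by perfectness of $M$, and $\varphi \in \mathcal D_{comm}(M)$. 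Then (\ref{eq-nonlin}) on $M\times M\times M$ makes $\varphi$ a CPA-structure on $M$, so $\varphi = 0$ by the first paragraph.

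Now the identity (\ref{eq-1}) applied to $(D,x,y)$ with $x,y \in M$ splits into an $M$-component forcing $\psi$ to be a derivation of $M$, and a scalar component giving $\mu([M,M]) = 0$, hence $\mu = 0$. Applying (\ref{eq-1}) to $(x,D,y)$ together with the already known vanishings yields $[\psi(x),y] = 0$ for all $x,y \in M$, so $\psi(M) \subseteq Z(M) = \mathbb Cz$; combined with the derivation property this forces $\psi([M,M]) = 0$, whence $\psi = 0$. Finally, (\ref{eq-1}) on $(D,D,x)$ collapses to the operator identity $\ad(a) + \eta D = 0$ on $M$. Since $D$ is an outer derivation of $M$, $\eta = 0$, and then $\ad(a) = 0$ gives $a \in Z(M) = \mathbb Cz$.

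Thus the only possibly nonzero value of $\Phi$ is $\Phi(D,D) = cz$ for some $c \in \mathbb C$. The last step is the easy verification that any such map really is a CPA-structure: all the identities in (\ref{eq-comm1})--(\ref{eq-nonlin}) reduce to $0 = 0$ because $z$ is central and is killed by $D$. The main conceptual obstacle is the middle step, where Lemma \ref{lemma-1} cannot be invoked verbatim --- the intermediate algebra $M$ is not centerless --- so one has to redo its proof with the center $\mathbb Cz$ taken into account and lean crucially on the outerness of the Euler derivation to extract $\eta = 0$.
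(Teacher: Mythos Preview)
Your argument is correct, but you apply the two lemmas in the opposite order from the paper. The paper first forms the semidirect sum $L = \widehat{\mathfrak g} \oplus \mathbb C D$ and applies Lemma~\ref{lemma-1} verbatim ($\widehat{\mathfrak g}$ is perfect, centerless, with trivial CPA-structures, and $D$ is outer), obtaining that every CPA-structure on $L$ vanishes; then it applies Lemma~\ref{lemma-2} to the central extension $L \oplus \mathbb C z$, and since $[L,L] = \widehat{\mathfrak g}$, the surviving CPA-structures are exactly the scalar multiples of $(D,D)\mapsto z$. Both lemmas apply off the shelf, with no modification.

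Your route---first extending centrally to $M = \widehat{\mathfrak g}\oplus\mathbb C z$, then adjoining $D$---forces you to redo the proof of Lemma~\ref{lemma-1} by hand, because $M$ has nontrivial center. You carry this out correctly (the key new inputs are that $M$ is perfect, $Z(M)=\mathbb C z$, and $D$ remains outer on $M$, the last of which you should perhaps justify explicitly: any inner realization $\ad_M(m)$ would descend to an inner realization of $D$ on $\widehat{\mathfrak g}\cong M/\mathbb C z$, contradicting outerness there). The upshot is the same, but the paper's ordering is more economical: it chooses the order of extensions so that each intermediate algebra satisfies the hypotheses of the relevant lemma exactly, avoiding the ad hoc reworking.
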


\begin{proof}
As by Theorem \ref{th-2} any CPA-structure on $\widehat{\mathfrak g}$ vanishes,
and the Euler derivation $t \frac{\dcobound}{\dcobound t}$ (in fact, any nonzero
derivation of the Laurent polynomials extended to the loop algebra 
$\widehat{\mathfrak g}$) is an outer derivation of $\widehat{\mathfrak g}$, 
Lemma \ref{lemma-1} implies that any CPA-structure on the semidirect sum 
$\widehat{\mathfrak g} \oplus \mathbb C t \frac{\dcobound}{\dcobound t}$ 
vanishes. This allows, in its turn, to apply Lemma \ref{lemma-2} to 
$L = \widehat{\mathfrak g} \oplus \mathbb C t \frac{\dcobound}{\dcobound t}$.
Since $[L,L] = \widehat{\mathfrak g}$, Lemma \ref{lemma-2} yields the desired
result.
\end{proof}

\section*{Acknowledgement}

Thanks are due to the referee for helpful remarks which improved the 
presentation. Dietrich Burde is supported by the Austrian Science Foun\-da\-tion FWF, grant 
P28079 and grant I3248.

\end{document}